\newtheorem {theorem} {Theorem}%[section]
\newtheorem {lemma}  [theorem]{Lemma}
\newtheorem {remark} [theorem]{Remark}
\newtheorem {mtheorem} {Theorem}
\newtheorem {mcorollary} {Corollary}
\newcommand{\R}{\ensuremath{\mathbb{R}}}
\newcommand{\CO}{\ensuremath{\mathcal{O}}}
\newcommand{\T}{\theta}
\newcommand{\vf}{\varphi}
\newcommand{\bg}{{\bf g}}
\newcommand{\bu}{{\bf u}}
\newcommand{\f}{{\bf f}}
\def\p{\partial}
\def\e{\varepsilon}
\title[On the higher order stroboscopic averaged functions]
{Higher order stroboscopic averaged functions:\\ a general relationship with Melnikov functions}
\author[D.D. Novaes]
{Douglas D. Novaes$^{1}$}
\address{$^1$ Departamento de Matem\'{a}tica - Instituto de Matem\'{a}tica, Estat\'{i}stica e Computa\c{c}\~{a}o Cient\'{i}fica (IMECC) - Universidade
Estadual de Campinas (UNICAMP),  Rua S\'{e}rgio Buarque de Holanda, 651, Cidade Universit\'{a}ria Zeferino Vaz, 13083-859, Campinas, SP,
Brazil} \email{ddnovaes@unicamp.br}
\subjclass[2010]{34C29, 34E10, 34C25}
\keywords{averaging theory, Melnikov method, averaged functions, Melnikov functions, higher order analysis}
\begin{document}

\maketitle

\begin{abstract}
In the research literature, one can find distinct notions for higher order averaged functions of regularly perturbed non-autonomous $T$- periodic differential equations of the kind  $x'=\e F(t,x,\e)$.  By one hand, the classical (stroboscopic) averaging method provides asymptotic estimates for its solutions in terms of some uniquely defined functions $\bg_i$'s,  called averaged functions, which are obtained through near-identity stroboscopic transformations and by solving homological equations. On the other hand, a Melnikov procedure is employed to obtain bifurcation functions $\f_i$'s which controls in some sense the existence of isolated $T$-periodic solutions of the differential equation above. In the research literature, the bifurcation functions $\f_i$'s are sometimes likewise called averaged functions, nevertheless, they also receive the name of Poincar\'{e}-Pontryagin-Melnikov functions or just Melnikov functions. While it is known that $\f_1=T \bg_1,$ a general relationship between $\bg_i$ and $\f_i$ is not known so far for $i\geq 2.$ Here,  such a general relationship between these two distinct notions of averaged functions is provided, which allows the computation of the stroboscopic averaged functions of any order avoiding the necessity of dealing with near-identity transformations and homological equations. In addition, an Appendix is provided with implemented Mathematica algorithms for computing both higher order averaging functions.
\end{abstract}

\section{Introduction}

This paper is dedicated to investigate the link between two distinct notions of higher order averaged functions of regularly perturbed non-autonomous $T$- periodic differential equations of the kind $x'=\e F(t,x,\e)$.

The first notion comes from the classical averaging method, which provides asymptotic estimates for the solutions of the differential equation $x'=\e F(t,x,\e)$ in terms of some uniquely defined functions, called averaged functions, which are obtained through near-identity stroboscopic transformations and by solving homological equations.

The second notion is provided by the Melnikov method, where the averaged functions are obtained by expanding the time-$T$ map of the differential equation $x'=\e F(t,x,\e)$ around $\e=0$ and control, in some sense, the bifurcation of isolated $T$-periodic solutions. 

In the sequel, these notions will be discussed in detail.

\subsection{The averaging method}
An important and celebrated tool for dealing with nonlinear oscillating systems in the presence of small perturbations is the {\it averaging method}, which has its foundations in the works of Clairaut, Laplace, and Lagrange, in the development of celestial mechanics, and was rigorous formalized by the works of Fatou, Krylov, Bogoliubov, and Mitropolsky  \cite{BM,Bo,Fa,BK} (for a brief historical review, see  \cite[Chapter 6]{MN} and \cite[Appendix A]{SVM}). It is mainly concerned in providing long-time asymptotic estimates for solutions of non-autonomous differential equations given in the following standard form 
 \begin{equation}\label{eq:e1}
	x' = \sum_{i=1}^k\e F_i(t,x)+\e^{k+1}R(t,x,\e).
	\end{equation}
	Here, $F_i\colon\R\times D\to \R^n,$ for $i=1,\ldots,k,$ and $R\colon\R\times D\times[-\e_0,\e_0]\to \R^n$ are assumed to be smooth functions $T-$periodic in the variable $t,$ with $D$ being an open subset of $\R^n$ and $\e_0 > 0$ small. Such asymptotic estimates are given in terms of solutions of an autonomous {\it truncated averaged equation}
	\begin{equation}\label{taq}
	\xi'=\sum_{i=1}^k\e^i \bg_i(\xi),
	\end{equation}
	where $\bg_i:D\rightarrow\R^n,$ for $i\in\{1,\ldots,k\},$ are obtained by the following result:
	\begin{theorem}[{\cite[Lemma 2.9.1]{SVM}}]\label{thm:av1}
	There exists a smooth $T$-periodic near-identity transformation
	\[
	x=U(t,\xi ,\e)=\xi+\sum_{i=1}^k \e^i\, \bu_i(t,\xi ),
	\]
	 satisfying $U(0,\xi,\e)=\xi$, such that the differential equation \eqref{eq:e1} is transformed into
	\begin{equation}\label{fullaveq}
	\xi'=\sum_{i=1}^k\e^i\bg_i(\xi)+\e^{k+1} r_k(t,\xi,\e).
	\end{equation}
	\end{theorem}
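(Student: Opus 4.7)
The plan is to construct the transformation $U(t,\xi,\e)=\xi+\sum_{i=1}^k\e^i\bu_i(t,\xi)$ inductively in $i$, determining $\bu_i$ and $\bg_i$ simultaneously at each order by solving a homological equation. First, I would differentiate $x=U(t,\xi,\e)$ along solutions and impose the desired form \eqref{fullaveq}, which gives
\begin{equation*}
\p_t U(t,\xi,\e)+\p_\xi U(t,\xi,\e)\cdot\Bigl(\sum_{i=1}^k\e^i\bg_i(\xi)+\e^{k+1}r_k(t,\xi,\e)\Bigr)=\sum_{i=1}^k\e^i F_i(t,U(t,\xi,\e))+\e^{k+1}R(t,U,\e).
\end{equation*}
Then I would Taylor-expand $F_i(t,U(t,\xi,\e))$ around $\xi$ in powers of $\e$ and collect coefficients.

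At order $\e$ this reads $\p_t\bu_1(t,\xi)=F_1(t,\xi)-\bg_1(\xi)$, while at order $\e^i$ one obtains the homological equation
\begin{equation*}
\p_t\bu_i(t,\xi)=H_i(t,\xi)-\bg_i(\xi),
\end{equation*}
where $H_i$ is an explicit $T$-periodic function of $t$ depending only on $F_1,\ldots,F_i$ and on the previously constructed $\bu_1,\ldots,\bu_{i-1}$ and $\bg_1,\ldots,\bg_{i-1}$. The key observation is that a $T$-periodic solution $\bu_i(\cdot,\xi)$ exists if and only if the right-hand side has zero mean in $t$, which forces the unique choice
\begin{equation*}
\bg_i(\xi)=\frac{1}{T}\int_0^T H_i(s,\xi)\,ds, \qquad \bu_i(t,\xi)=\int_0^t\bigl(H_i(s,\xi)-\bg_i(\xi)\bigr)ds,
\end{equation*}
the latter normalized so that $\bu_i(0,\xi)=0$, which also ensures $U(0,\xi,\e)=\xi$. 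Iterating from $i=1$ to $i=k$ defines all the ingredients; everything of order $\e^{k+1}$ and higher that is generated along the way is smooth and $T$-periodic in $t$, so it is absorbed into the remainder $r_k(t,\xi,\e)$.

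The main technical obstacle is organizing, in closed form, the Taylor expansion of $F_j\bigl(t,\xi+\sum_{\ell\geq 1}\e^\ell\bu_\ell(t,\xi)\bigr)$, since the coefficient $H_i$ involves a sum over multi-indices of partial derivatives $D^m F_j(t,\xi)$ contracted against products $\bu_{i_1}\cdots\bu_{i_m}$ with $j+i_1+\cdots+i_m=i$. The cleanest route is to expand via a Faà di Bruno-type formula (equivalently, Bell polynomials), which produces an explicit, if combinatorially heavy, formula for $H_i$; this explicitness is in fact what later enables a term-by-term comparison between the stroboscopic $\bg_i$ and the Melnikov-type $\f_i$. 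A minor bookkeeping point is that $\p_\xi U$ is itself a series in $\e$, so its contribution to orders below $\e^i$ must be tracked and incorporated into $H_i$.
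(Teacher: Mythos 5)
The paper states this theorem without proof, citing \cite{SVM}; your construction is the standard one (direct substitution of the near-identity ansatz, order-by-order homological equations $\p_t\bu_i=H_i-\bg_i$, with $\bg_i$ fixed as the mean of $H_i$ to make $\bu_i$ $T$-periodic and the normalization $\bu_i(0,\xi)=0$ enforcing the stroboscopic condition and hence uniqueness), and it is correct. The only point you gloss over is that to write the transformed system as an ODE for $\xi$ with a well-defined remainder $r_k$ one must invert $\p_\xi U=I+\CO(\e)$, which is harmless for $|\e|$ small; otherwise this matches the argument in \cite{SVM} (which also offers a Lie-transform reformulation of the same computation).
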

	
	The averaging theory states that, for $|\e|\neq0$ sufficiently small, the solutions of the original differential equation \eqref{eq:e1} and the truncated averaged equation \eqref{taq}, starting at the same initial condition, remains $\e^k$-close for a time interval of order $1/\e$ (see \cite[Theorem 2.9.2]{SVM}).
		
	The functions $\bg_i$ and $\bu_i$ can be algorithmically computed by solving homological equations. Section 3.2 of \cite{SVM} is devoted to discuss how is the best way to work with such near-identity transformations based on Lie theory (see also \cite{ellison,perko}). One can see that, in general, $\bg_1$ is the average of $F_1(t,\cdot),$ that is,
	\[
	\bg_1(z)=\dfrac{1}{T}\int_0^T F_1(t,x)dt.
	\]
	It is worth mentioning that the so-called {\it stroboscopic condition} $U(\xi,0,\e)=\xi$ does not have to be assumed in order to get \eqref{fullaveq}. However, in that case, the functions $\bg_i,$ for $i\geq2,$ are not uniquely determined. For the stroboscopic averaging, the uniqueness of each $\bg_i$ is guaranteed and so it is natural to call it by {\it averaged function of order $i$} (or {\it $i$th-order averaged function})  of the differential equation \eqref{eq:e1}. Here, these functions are referred by {\it stroboscopic averaged functions} to indicate that the stroboscopic condition is being assumed.
	
The averaging method has been employed in the investigation of invariant manifolds of differential equations (see, for instance, \cite{hale61}). In particular, it has been extensively used to study periodic solutions of differential equations. One can find results, in the classical research literature, that relate simple zeros of the first-order averaged function $\bg_1$ with isolated $T$-periodic solutions of \eqref{eq:e1} (see, for instance, \cite{Hale,SVM,Ver06}).

\subsection{The Melnikov method}\label{sec:mm}

The mentioned results relating simple zeros of the first-order averaged function $\bg_1$ with isolated $T$-periodic solutions of \eqref{eq:e1} have been generalized in several directions (see, for instance, \cite{buicua2004averaging,CMZ21,LliNovRod17,LliNovTei2014,LlilNovTei15, Novaes2021,NovaesSilva}). In particular, a recursively defined sequence of functions $\f_i:D\rightarrow \R^n,$ $i\in\{1,\ldots,k\},$ was obtained  in \cite{LliNovTei2014}, for which the following result holds:
\begin{theorem}[\cite{LliNovTei2014}]
Denote $\f_0=0.$ Let $\ell\in\{1,\ldots,k\}$ satisfying $\f_0=\cdots\f_{\ell-1}=0$ and $\f_{\ell}\neq0.$ Assume that $z^*\in D$ is a simple zero of $\f_{\ell}.$ Then, for $|\e|\neq0$ sufficiently small, the differential equation \eqref{eq:e1} admits an isolated $T$-periodic solution $\vf(t,\e)$ such that $\vf(0,\e)\to z^*$ as $\e\to 0.$ 
\end{theorem}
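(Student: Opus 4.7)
The plan is to show that $T$-periodic solutions of \eqref{eq:e1} passing near $z^*$ at time zero correspond to zeros of a displacement map whose leading behavior in $\e$ is governed by $\f_\ell$, and then to solve the resulting equation by the Implicit Function Theorem. For $z$ in a neighborhood of $z^*$ and $|\e|$ small, let $x(t,z,\e)$ denote the solution of \eqref{eq:e1} with $x(0,z,\e)=z$; since the right-hand side vanishes at $\e=0$, the solution is defined on $[0,T]$ and $x(t,z,0)\equiv z$. A $T$-periodic orbit through $z$ corresponds exactly to a zero of
\[
d(z,\e):=x(T,z,\e)-z,
\]
because the $T$-periodicity of the vector field in $t$ together with uniqueness of solutions promotes $x(T,z,\e)=z$ to $x(t+T,z,\e)=x(t,z,\e)$ for all $t$.

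Next I would compute the asymptotic expansion of $d(z,\e)$ in $\e$ by repeatedly differentiating the integral equation
\[
x(t,z,\e)=z+\int_0^t\left(\sum_{i=1}^k\e^i F_i(s,x(s,z,\e))+\e^{k+1}R(s,x(s,z,\e),\e)\right)ds
\]
in $\e$ and evaluating at $\e=0$, invoking Fa\`a di Bruno's formula to handle the composition with $x(s,z,\e)$. This produces
\[
d(z,\e)=\sum_{i=1}^{k}\e^i\, h_i(z)+\CO(\e^{k+1}),
\]
where each $h_i(z)$ is a sum, indexed by partitions of $i$, of iterated integrals of the $F_j$ and their partial derivatives in $x$ along the constant path $s\mapsto z$. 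The core step is to match these expressions with the recursively defined functions $\f_i$ of \cite{LliNovTei2014}: by induction on $i$ one checks that the two recursions produce the same combinatorial formulas, so that $h_i=\f_i$ for every $i\in\{1,\ldots,k\}$.

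Granted this identification, the hypothesis $\f_0=\cdots=\f_{\ell-1}=0$ allows us to factor
\[
d(z,\e)=\e^{\ell}\bigl(\f_{\ell}(z)+\e\,\rho(z,\e)\bigr),
\]
with $\rho$ smooth on the relevant compact neighborhood. Set $F(z,\e):=\f_{\ell}(z)+\e\,\rho(z,\e)$. Then $F(z^*,0)=\f_{\ell}(z^*)=0$, and since $z^*$ is a simple zero of $\f_{\ell}$ the Jacobian $\partial_z F(z^*,0)=D\f_{\ell}(z^*)$ is invertible. The Implicit Function Theorem furnishes a smooth curve $\e\mapsto z(\e)$, defined for $|\e|$ small, with $z(0)=z^*$ and $F(z(\e),\e)\equiv 0$, hence $d(z(\e),\e)=0$. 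The desired periodic solution is $\vf(t,\e):=x(t,z(\e),\e)$, and isolation follows from the persistence of invertibility of $\partial_z F$ near $(z^*,0)$.

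The main obstacle will be the combinatorial identification $h_i=\f_i$ carried out in the second step: both sides are expressible as sums over partitions of $i$, but the natural bookkeeping of the Fa\`a di Bruno expansion differs from the bookkeeping in the recursive definition of $\f_i$, and a careful inductive argument is required to reconcile them at arbitrary order. A slightly softer route, which I would fall back on if the combinatorics becomes unwieldy, is to replace the Implicit Function Theorem by a Brouwer degree argument applied to $F(\cdot,\e)$ on a small ball around $z^*$; this only requires $h_{\ell}$ to be proportional to $\f_{\ell}$ by a nonzero constant and the local Brouwer index of $\f_{\ell}$ at $z^*$ to be nonzero, both of which follow directly from the simple zero assumption.
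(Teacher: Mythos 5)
Your proposal is correct and follows essentially the same route as the proof in the cited source \cite{LliNovTei2014} (the paper itself only imports this theorem and does not reprove it): reduce $T$-periodicity to zeros of the displacement map $d(z,\e)=x(T,z,\e)-z$, expand in $\e$, factor out $\e^{\ell}$, and conclude by the Implicit Function Theorem (or Brouwer degree in the non-simple case). The ``main obstacle'' you flag --- identifying the Taylor coefficients $h_i$ of the displacement map with the $\f_i$ --- is not actually an obstacle here, since by Lemma \ref{lem:fund} and the definition \eqref{avfunc} the functions $\f_i(z)=y_i(T,z)/i!$ \emph{are} precisely those Taylor coefficients; the Fa\`a di Bruno bookkeeping is already packaged in \eqref{yi}, so no separate combinatorial reconciliation is needed.
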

The bifurcation functions $\f_i,$ $i\in\{1,\ldots,k\},$ are obtained through a Melnikov procedure, which consists in expanding the time-$T$ map of the differential equation \eqref{eq:e1} around $\e=0$ by using the following result:
\begin{lemma}[\cite{LliNovTei2014,N17}]
\label{lem:fund}Let $x(t,z,\e)$ be the solution of \eqref{eq:e1} satisfying
$x(0,z,\e)=z$. Then,
\[
x(t,z,\e)=z+\sum_{i=1}^{k}\e^{i}\dfrac{y_i(t,z)}{i!}+\CO(\e^{k+1}), 
\]
where 
\begin{equation}\label{yi}
\begin{aligned}
y_1(t,z)=& \int_0^tF_1(s,z)\,ds\,\, \text{ and }\vspace{0.3cm}\\
y_i(t,z)=& \int_0^t\bigg(i!F_i(s,z)+\sum_{j=1}^{i-1}\sum_{m=1}^j\dfrac{i!}{j!}\p_x^m F_{i-j} (s,z)B_{j,m}\big(y_1,\ldots,y_{j-m+1}\big)(s,z)\bigg)ds,
\end{aligned}
\end{equation}
for $i\in\{2,\ldots,k\}.$
\end{lemma}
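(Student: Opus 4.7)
The plan is to exploit the smooth dependence of $x(t,z,\e)$ on $\e$ to Taylor-expand it at $\e=0$, substitute the expansion into \eqref{eq:e1}, and match coefficients at each power of $\e$, with Fa\`a di Bruno's formula producing the Bell-polynomial structure.

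First, I would invoke the classical result on differentiability of solutions with respect to parameters: since $F_1,\ldots,F_k$ and $R$ are smooth, the solution $x(t,z,\e)$ is $C^{k+1}$ in $\e$, uniformly for $(t,z)$ in compact subsets of $\R\times D$ and for $|\e|$ sufficiently small. Setting $y_i(t,z):=\p_\e^i x(t,z,\e)\bigl|_{\e=0}$, Taylor's theorem gives
\[
x(t,z,\e)=z+\sum_{i=1}^{k}\e^{i}\dfrac{y_i(t,z)}{i!}+\CO(\e^{k+1}),
\]
with $y_i(0,z)=0$ for every $i\geq 1$, since $x(0,z,\e)\equiv z$. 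It then suffices to derive the recursive formulas \eqref{yi} for the $y_i$.

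Differentiating the expansion in $t$ yields $x'(t,z,\e)=\sum_{i=1}^{k}\e^{i}y_i'(t,z)/i!+\CO(\e^{k+1})$, which I would match against the right-hand side of \eqref{eq:e1}. Applying the multivariate Fa\`a di Bruno formula to each composition $F_j(t,x(t,z,\e))$ and using $\p_\e^p x\bigl|_{\e=0}=y_p$ gives
\[
F_j(t,x(t,z,\e))=F_j(t,z)+\sum_{n=1}^{k-j}\dfrac{\e^n}{n!}\sum_{m=1}^{n}\p_x^m F_j(t,z)\,B_{n,m}\bigl(y_1,\ldots,y_{n-m+1}\bigr)+\CO(\e^{k-j+1}),
\]
where $B_{n,m}$ denote the partial Bell polynomials. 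Multiplying by $\e^j$, summing over $j\in\{1,\ldots,k\}$, collecting the coefficient of $\e^i$ through the substitution $i=j+n$, and then relabeling $j\mapsto i-j$ in the outer sum would produce
\[
\dfrac{y_i'(t,z)}{i!}=F_i(t,z)+\sum_{j=1}^{i-1}\dfrac{1}{j!}\sum_{m=1}^{j}\p_x^m F_{i-j}(t,z)\,B_{j,m}\bigl(y_1,\ldots,y_{j-m+1}\bigr).
\]
Multiplying through by $i!$ and integrating from $0$ to $t$, using $y_i(0,z)=0$, produces exactly \eqref{yi}. The case $i=1$ is immediate: the inner sum is empty, so $y_1'(t,z)=F_1(t,z)$.

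The main technical point is the combinatorial bookkeeping: one must verify that expanding the tensor powers $(x-z)^{\otimes m}$ in $\e$ and collecting terms reproduces the partial Bell polynomials $B_{j,m}(y_1,\ldots,y_{j-m+1})$ with precisely the prefactor $i!/j!$. The smoothness in $\e$ and the uniform $\CO(\e^{k+1})$ control on the Taylor remainder are routine consequences of the variational equations combined with Gronwall's inequality, so the substance of the lemma is the algebraic matching just sketched.
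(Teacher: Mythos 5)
Your proposal is correct and follows essentially the same route as the proof in the cited sources \cite{LliNovTei2014,N17} (the present paper states the lemma without proof): smooth dependence on $\e$, Taylor expansion of $x(t,z,\e)$ at $\e=0$, and the multivariate Fa\`a di Bruno formula applied to $F_j(t,x(t,z,\e))$ to produce the partial Bell polynomials, followed by the index shift $i=j+n$ and integration in $t$ using $y_i(0,z)=0$. The one cosmetic difference is that the original argument works directly with the integral-equation form of \eqref{eq:e1} rather than differentiating the expansion in $t$, but your version is equally rigorous since $\p_t\p_\e^i x=\p_\e^i\p_t x$ for smooth solutions.
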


As usual, for  $p$ and $q$ positive integers, $B_{p,q}$ denotes the  {\it partial Bell polynomials}:
\[
B_{p,q}(x_1,\ldots,x_{p-q+1})=\sum\dfrac{p!}{b_1!\,b_2!\cdots b_{p-q+1}!}\prod_{j=1}^{p-q+1}\left(\dfrac{x_j}{j!}\right)^{b_j}.
\]
The sum above is taken  
over all the tuples of nonnegative integers $(b_1,b_2,\cdots,b_{p-q+1})$ satisfying $b_1+2b_2+\cdots+(p-q+1)b_{p-q+1}=p$ and
$b_1+b_2+\cdots+b_{p-q+1}=q.$ Here, $\p_x^m F_{i-j} (s,z)$ denotes the Frechet's derivative of $F_{i-j}$ with respect to the variable $x$ evaluated at $x=z$, which is a symmetric $m$-multilinear map that is applied to combinations of ``products'' of $m$ vectors in $\R^n,$ in the present case $B_{j,m}\big(y_1,\ldots,y_{j-m+1}\big).$

Accordingly, the functions $\f_i,$ for $i\in\{1,\ldots,k\},$ are defined by
\begin{equation}\label{avfunc}
\f_i(z)=\dfrac{y_i(T,z)}{i!}.
\end{equation}
Notice that $\f_1$ is the average of $F_1(t,\cdot)$ multiplied by a factor $T$, that is, $\f_1=T\bg_1$. Usually, $\f_i$ is likewise called by {\it averaged function of order $i$} (or {\it $i$th-order averaged function}) of the differential equation \eqref{eq:e1}. It is worth mentioning that, in the research literature, the bifurcation functions $\f_i$'s also receive the name of {\it Poincar\'{e}-Pontryagin-Melnikov functions}  or just {\it Melnikov functions}. Such functions can be easily formally computed from \eqref{yi} and \eqref{avfunc}, for instance
\begin{equation}\label{f2}
\begin{aligned}
\f_2(z)=&\int_0^T\bigg(F_2(t,z)+\p_x F_1(t,z)y_1(t,z)\bigg)dt\\
=&\int_0^T\bigg(F_2(t,z)+\p_x F_1(t,z)\int_0^t F_1(s,z)ds \bigg)dt.\\
\end{aligned}
\end{equation}

\subsection{Main goals}
In a first view, the functions $\f_i$ and $\bg_i,$ for $i\geq 2,$ do not hold a clear relationship. Thus, the present study is mainly concerned in establishing a link between them.

In Section \ref{sec:mr}, the main result of this paper, Theorem \ref{main}, provides a general relationship between such distinct notions of higher order averaged functions, which allows the computation of the higher order stroboscopic averaged functions avoiding the necessity of dealing with near-identity transformations and homological equations. In addition, an Appendix is provided with implemented Mathematica algorithms for computing both higher order averaging functions. 

In Section \ref{sec:cons}, some consequences of the main result are presented. First, Corollary \ref{main} states that $\f_i=T\,\bg_i,$ for $i\in\{1,\ldots,\ell\},$ provided that either $\f_1=\cdots=\f_{\ell-1}=0$ or $\bg_1=\cdots=\bg_{\ell-1}=0.$ This gives a relatively simple and computable expression for the first non-vanishing stroboscopic averaged function (see Corollary \ref{c1}).  This last result has been reported in \cite{Han2015} for differential equations coming from planar near-Hamiltonian systems.

\section{Related results in research literature}\label{sec:known}
In this section, some known results in research literature regarding the relationship between Melnikov functions and averaged functions are discussed.

In \cite{Han2015}, the authors have investigated the relationship between averaged functions and Melnikov functions for planar near-Hamiltonian systems 
\[
\dot x=H_y+\e f(x,y,\e),\quad \dot y=-H_x+\e g(x,y,\e), \quad (x,y)\in\R^2,
\]
assuming that the unperturbed system $\dot x=H_y,\quad \dot y=-H_x$ has a continuous family of periodic solutions $L_h$, $h\in J\subset \mathbb{R}$. It is worthy mentioning that this is the natural context where Melnikov theory is applied.  After a change of variables $(x,y)\in\R^2\mapsto(\T,h)\in [0,2\pi)\times J$, the near-Hamiltonian system can be written in the standard form \eqref{eq:e1},
\[
\dfrac{d h}{d\T}=\e F(\T,h,\e), \quad (\T,h)\in [0,2\pi)\times J
\]
(see  \cite[Lemma 2.2]{Han2015}), for which the Melnikov functions $\f_i$'s and the stroboscopic averaged functions $\bg_i$'s can be computed. Then, in \cite[Theorem 3.1]{Han2015}, they showed that  $\f_i=T\,\bg_i,$ for $i\in\{1,\ldots,\ell\},$ provided that $\f_1=\cdots=\f_{\ell-1}=0$.

Although less related with the present study, another interesting paper to be mention is \cite{buica17}, where the author considered planar autonomous differential equations given by
\begin{equation}\label{eq:buica1}
\dot x=X_0(x)+\e X(x,\e),
\end{equation}
for which the unperturbed system $\dot x=X_0(x)$ has a continuous period annulus $\mathcal{P}\subset \R^2$ without equilibria.  A polar-like change of variables is employed in order to write the planar system as the standard form \eqref{eq:e1},
\begin{equation}\label{eq:buica2}
\dfrac{d h}{d\T}=\e F(\T,h,\e), \quad (\T,h)\in [0,2\pi)\times J,
\end{equation}
(see \cite[Propositions 4 and 5]{buica17}). The {\it averaging method for scalar periodic equations}, described in \cite[Section 1]{buica17}, corresponds to the Melnikov method described in Section \ref{sec:mm} of this present paper, where the averaged functions $f_i$'s are given as the coefficients of the expansion of the time-$T$ map of the  differential equation \eqref{eq:buica2} around $\e=0.$  The {\it Melnikov function method for planar systems}, also described in \cite[Section 1]{buica17}, consider a Poincar\'{e} map $P^{\gamma}$ of the autonomous differential equation \eqref{eq:buica1} defined on an analytic transversal section given by $\Sigma=\{\gamma(h):h\in I\}$. Accordingly, the Melnikov functions $M_i$'s are given as the coefficients of the expansion of the Poincar\'{e} map around $\e=0,$ which may depend on both the section $\Sigma$ and its parametrization $\gamma.$  As the conclusion of \cite{buica17}, it was showed that both procedure correspond to the study of some Poincar\'{e} map and that $M_{\ell}=f_{\ell},$ where $\ell$ is the index of the first non-vanishing Melnikov function.

\section{Main result}\label{sec:mr}

The main result of this paper establishes a general relationship between the distinct notions of higher order averaged functions provided by the stroboscopic averaging method in Theorem \ref{thm:av1} and by the Melnikov procedure in Lemma \ref{lem:fund}. 

\begin{mtheorem}\label{c2} For $i\in\{1,\ldots,k\},$ the following recursive relationship between $\bg_i$ and $\f_i$ holds:
\begin{equation}\label{rec:gi}
\begin{aligned}
\bg_1(z)=&\dfrac{1}{T}\f_1(z),\\
\bg_i(z)=&\dfrac{1}{T}\left( \f_i(z)-\sum_{j=1}^{i-1}\sum_{m=1}^j\dfrac{1}{j!}d^m \bg_{i-j}(z) \int_0^T B_{j,m}\big(\tilde y_1,\ldots,\tilde y_{j-m+1}\big)(s,z)ds\right),
\end{aligned}
\end{equation}
where $\tilde y_i(t,z),$ for $i\in\{1,\ldots,k\},$ are polynomial in the variable $t$ recursively defined as follows:
\begin{equation}\label{tildeyi}
\begin{aligned}
\tilde y_1(t,z)=&t\, \bg_1(z)\vspace{0.3cm}\\
\tilde y_i(t,z)=& i!t\,\bg_i(z) +\sum_{j=1}^{i-1}\sum_{m=1}^j\dfrac{i!}{j!}d^m \bg_{i-j}(z) \int_0^t B_{j,m}\big(\tilde y_1,\ldots,\tilde y_{j-m+1}\big)(s,z)ds.
\end{aligned}
\end{equation}
\end{mtheorem}

Theorem \ref{c2} is proven in Section \ref{proof:main}. An Appendix is provided with Mathematica algorithms implementing the recursive formulae \eqref{yi}, \eqref{rec:gi}, and \eqref{tildeyi} for computing both higher order averaging functions.one has

\begin{remark}
By applying the formula above for $i=2$, one has
\[
\bg_2(z)=\dfrac{1}{T}\left(\f_2(z)-\dfrac{1}{2}d\f_1(z) \f_1(z)\right),
\]
where $\f_2$ is explicitly given by \eqref{f2}. Thus,
\[
\bg_2(z)=\dfrac{1}{T}\int_0^T\bigg(F_2(t,z)+\p_x F_1(t,z)\int_0^t\Big(F_1(s,z)-\dfrac{1}{2}\bg_1(z)\Big)ds\bigg)dt,
\]
which coincides with the expression provided by \cite[Section 2.9.1]{SVM}.
\end{remark}

\subsection{Proof of Theorem \ref{c2}}\label{proof:main}
From Theorem \ref{thm:av1}, there exists a $T$-periodic near-identity transformation $x=U(t,\xi ,\e)$, satisfying $U(0,\xi,\e)=\xi$, such that the differential equation \eqref{eq:e1} is transformed into \eqref{fullaveq}.
Let $x(t,z,\e)$ and $\xi(t,z,\e)$ be, respectively, the solutions of \eqref{eq:e1} and \eqref{fullaveq} satisfying $x(0,z,\e)=\xi(0,z,\e)=z.$ From Lemma \ref{lem:fund}, 
\begin{equation}\label{xexp}
x(T,z,\e)=z+\sum_{i=1}^{k}\e^{i}\f_i(z)+\CO(\e^{k+1}),
\end{equation}
where $\f_i,$ for $i\in\{1,\ldots,k\},$ are given by  \eqref{avfunc}, and
\begin{equation}\label{xiexp}
\xi(T,z,\e)=z+\sum_{i=1}^{k}\e^{i}\tilde\f_i(z)+\CO(\e^{k+1}),
\end{equation}
where, for $i\in\{1,\ldots, k\},$
\begin{equation}\label{tildef}
\tilde\f_i(z)=\dfrac{\tilde y_i(T,z)}{i!}
\end{equation}
and the functions $\tilde y_i$'s are obtained recursively from \eqref{yi} as \eqref{tildeyi}:
\[
\begin{aligned}
\tilde y_1(t,z)=&t\, \bg_1(z)\vspace{0.3cm}\\
\tilde y_i(t,z)=& i!t\,\bg_i(z) +\sum_{j=1}^{i-1}\sum_{m=1}^j\dfrac{i!}{j!}d^m \bg_{i-j}(z) \int_0^t B_{j,m}\big(\tilde y_1,\ldots,\tilde y_{j-m+1}\big)(s,z)ds.
\end{aligned}
\]

Now, taking the transformation $x=U(t,\xi ,\e)$ into account, given $z\in D$, there exists $\hat z\in D$ such that $x(t,z,\e)=U(t,\xi(t,\hat z,\e),\e).$ By the stroboscopic condition, $U(0,\xi,\e)=\xi,$
it follows that 
 \[
 z=x(0,z,\e)=U(0,\xi(0,\hat z,\e),\e)=\xi(0,\hat z,\e)=\hat z.
 \]
 In addition, since $U$ is $T$ periodic in the variable $t,$ one also has that
 \[
 x(T,z,\e)=U(T,\xi(T,z,\e),\e)=\xi(T,z,\e).
 \]
 Thus, from \eqref{xexp} and \eqref{xiexp}, one obtains the following relationship
 \begin{equation}\label{relation}
 \f_i=\tilde\f_i\quad \text{for every}\quad i\in\{1,\ldots,k\}.
 \end{equation}
 
 The proof follows by substituting  \eqref{tildef} into the equality \eqref{relation} and, then, isolating $\bg_i(z)$ in the resulting relation by taking \eqref{tildeyi} into account.

\subsection{Some consequences}\label{sec:cons}
Two main consequences of Theorem \ref{c2} are given in the sequel. The first one states that $\f_i=T\,\bg_i,$ for $i\in\{1,\ldots,\ell\},$ provided that either $\f_1=\cdots=\f_{\ell-1}=0$ or $\bg_1=\cdots=\bg_{\ell-1}=0.$ This generalizes the result from \cite{Han2015} discussed in Section \ref{sec:known}.

\begin{mcorollary}\label{main}
Let $\ell\in\{2,\ldots,k\}$. If either $\f_1=\cdots=\f_{\ell-1}=0$ or $\bg_1=\cdots=\bg_{\ell-1}=0,$ then $\f_i=T\,\bg_i$, for $i\in\{1,\ldots,\ell\}.$
\end{mcorollary}
\begin{proof}
 
First, assume that $\bg_1=\cdots=\bg_{\ell-1}=0.$ Then, from \eqref{tildeyi}, $\tilde y_i=0,$ for $i\in\{1,\ldots,\ell-1\},$ and $\tilde y_{\ell}(t,z)=\ell! \, t\, \bg_{\ell}(z).$ Thus, from \eqref{tildef} and \eqref{relation}, it follows that $\f_i=\tilde\f_ i=0,$ for $i\in\{1,\ldots,\ell-1\},$ and $\f_{\ell}=\tilde\f_ {\ell}=T\bg_{\ell}.$

Finally, assume that $\f_1=\cdots=\f_{\ell-1}=0.$ Then, from \eqref{tildef}, \eqref{tildeyi}, and \eqref{relation}, one concludes that $\bg_i,$ for $i\in\{1,\ldots,\ell-1\},$ satisfy the following system of equations
\[
\begin{aligned}
0&=T\,\bg_1(z),\\
0&=T\,\bg_i(z) +\sum_{j=1}^{i-1}\sum_{m=1}^j\dfrac{1}{j!}d^m \bg_{i-j}(z) \int_0^t B_{j,m}\big(\tilde y_1,\ldots,\tilde y_{j-m+1}\big)(s,z)ds.
\end{aligned}
\]
Hence, $\bg_i=0$ (and, then, $\f_{i}=T\bg_{i}$)  for $i\in\{1,\ldots,\ell-1\}.$ Consequently, applying \eqref{tildeyi} for $i=\ell$ and taking \eqref{tildef} and \eqref{relation} into account, if follows that 
\[
\f_{\ell}(z)=\tilde \f_{\ell}(z)=\dfrac{\tilde y_{\ell}(T,z)}{\ell!}=T \bg_{\ell}(z).
\]
\end{proof}

Now, as a direct consequence of Corollary \ref{main}, the first non-vanishing stroboscopic averaged function can be computed in relatively simple way. In particular, the following result holds:
\begin{mcorollary}\label{c1}
Denote $\f_0=0$ and let $\ell\in\{1,\ldots,k\}$ satisfy $\f_1=\cdots\f_{\ell-1}=0.$ Then, there exists a smooth $T$-periodic near-identity transformation $x=U(t,\xi ,\e)$
	 satisfying $U(\xi,0,\e)=\xi$, such that the differential equation \eqref{eq:e1} is transformed into
\[
	\xi'=\e^{\ell}\dfrac{1}{T}\f_{\ell}(\xi)+\e^{\ell+1} r_{\ell}(t,\xi,\e).
\]
\end{mcorollary}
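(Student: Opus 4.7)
The plan is to deduce the corollary directly from Theorem \ref{thm:av1} together with the main theorem. First I would invoke Theorem \ref{thm:av1} to produce a $T$-periodic near-identity transformation $x=U(t,\xi,\e)$ satisfying the stroboscopic condition $U(\xi,0,\e)=\xi$ that conjugates \eqref{eq:e1} to the full averaged equation
\[
\xi'=\sum_{i=1}^{k}\e^{i}\bg_{i}(\xi)+\e^{k+1}r_{k}(t,\xi,\e),
\]
with the functions $\bg_i$ uniquely determined by the stroboscopic normalization.

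Next, under the standing hypothesis $\f_1=\cdots=\f_{\ell-1}=0$, I would apply Theorem \ref{main} to conclude that $\f_i=T\,\bg_i$ for every $i\in\{1,\ldots,\ell\}$. This immediately forces $\bg_1=\cdots=\bg_{\ell-1}=0$, while simultaneously identifying the first potentially nontrivial stroboscopic averaged function as $\bg_\ell=\f_\ell/T$.

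Substituting these identities back into the transformed equation gives
\[
\xi'=\e^{\ell}\frac{1}{T}\f_{\ell}(\xi)+\sum_{i=\ell+1}^{k}\e^{i}\bg_{i}(\xi)+\e^{k+1}r_{k}(t,\xi,\e),
\]
and I would then collect the last two (bounded, $T$-periodic in $t$) terms into a single remainder of the form $\e^{\ell+1}r_{\ell}(t,\xi,\e)$, which is the conclusion of the corollary.

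There is really no obstacle here; the content of the result is already fully packaged inside Theorem \ref{main} and Theorem \ref{thm:av1}. The only point that deserves a brief comment is that the transformation $U$ and the remainder produced by Theorem \ref{thm:av1} automatically satisfy the $T$-periodicity and near-identity conditions claimed, so no further construction is required beyond the bookkeeping described above.
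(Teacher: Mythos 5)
Your proposal is correct and is essentially the paper's intended argument: the corollary is stated there without a separate proof, as a direct consequence of Theorem \ref{thm:av1} (which supplies the stroboscopic transformation) and Theorem \ref{main} (which gives $\bg_1=\cdots=\bg_{\ell-1}=0$ and $\bg_\ell=\f_\ell/T$), with the higher-order terms absorbed into the remainder exactly as you describe. The only minor point worth flagging is the case $\ell=1$, where Theorem \ref{main} is not formally applicable (it assumes $\ell\geq 2$) but the needed identity $\f_1=T\,\bg_1$ is the classical fact already noted in the introduction.
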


\section*{Appendix A: Algorithms}\label{sec:alg}

\lstset{language=Mathematica}
\lstset{basicstyle={\sffamily\footnotesize},
	numbers=left,
	numberstyle=\tiny\color{gray},
	numbersep=5pt,
	breaklines=true,
	captionpos={t},
	frame={lines},
	rulecolor=\color{black},
	framerule=0.5pt,
	columns=flexible,
	tabsize=2
}

This appendix is devoted to provide implemented Mathematica algorithm, based on recursive formulae  \eqref{yi}, \eqref{rec:gi}, and \eqref{tildeyi}, for computing the higher order Melnikov functions and the higher order stroboscopic averaged functions.

In what follows, $F_i(t,x)$ is denoted by {\sffamily F[i,t,x]}, $y_i(t,x)$ is denoted by  {\sffamily y0[i,t]}, $\tilde y_i(t,x)$ is denoted by  {\sffamily y1[i,t]}, $\f_i(z)$ is denoted by {\sffamily f[i,z]}, and $\bg_i(z)$ is denoted by {\sffamily g[i,z]}.
The order of perturbation {\sffamily k} must be specified in order to run the code.

\begin{lstlisting}[language=Mathematica,caption={Mathematica's algorithm for computing $\f_i$ \vspace{0.05cm}},mathescape=true]
y0[1, t_] = Integrate[F[1, s, z], {s, 0, t}];
Y0[1] = {y0[1, t]};
For[i = 2, i <= k, i++,
 y0[i, t_] := Integrate[i! F[i, s, z] + Sum[Sum[i!/j! D[F[i - j, t, z], {z, m}] BellY[j, m, Y0[j - m + 1]], {m, 1, j}], {j, 1, i - 1}], {s, 0, t}];
 Y0[i] = Join[Y0[i - 1], {y0[i, t]}];
 f[i, z_] = y0[i, T]/i!];
\end{lstlisting}

\begin{lstlisting}[language=Mathematica,caption={Mathematica's algorithm for computing $\bg_i$ \vspace{0.05cm}},mathescape=true]
g[1, z_] = f[1, z]/T;
y1[1, t] = t g[1, z];
Y1[1, t_] = {y1[1, t]};
For[i = 2, i <= k, i++,
 g[i, z_] = 1/T (f[i, z] -Sum[Sum[1/j! D[g[i - j,z], {z, m}] Integrate[BellY[j, m, Y1[j - m + 1, s]], {s, 0, T}], {m, 1, j}], {j, 1, i - 1}]);
 y1[i, t_] =i! t g[i,z] + Sum[Sum[i!/j! D[g[i - j,z], {z, m}] Integrate[BellY[j, m, Y1[j - m + 1, s]], {s, 0, T}], {m, 1, j}], {j, 1, i - 1}];
 Y1[i, t_] = Join[Y1[i - 1, t], {y1[i, t]}]];
\end{lstlisting}

\section*{Acknowledgements}
The author thanks the referee for the constructive comments and
suggestions which led to an improved version of the manuscript.

The author was partially supported by S\~{a}o Paulo Research Foundation (FAPESP) grants 2018/16430-8, 2018/ 13481-0, and 2019/10269-3, and by Conselho Nacional de Desenvolvimento Cient\'{i}fico e Tecnol\'{o}gico (CNPq) grants 306649/2018-7 and  438975/2018-9. 

\bibliographystyle{abbrv}
\bibliography{Nov2020}

\begin{thebibliography}{10}

\bibitem{BM}
N.~N. Bogoliubov and Y.~A. Mitropolsky.
\newblock {\em Asymptotic methods in the theory of non-linear oscillations}.
\newblock Translated from the second revised Russian edition. International
  Monographs on Advanced Mathematics and Physics. Hindustan Publishing Corp.,
  Delhi, Gordon and Breach Science Publishers, New York, 1961.

\bibitem{Bo}
N.~Bogolyubov.
\newblock {\em O nekotoryh statisti\v{c}eskih metodah v
  matemati\v{c}esko\u{\i}\ fizike (On some statistical methods in mathematical
  physics)}.
\newblock Akademiya Nauk Ukrainsko\u{\i} SSR, Kiev, 1945.

\bibitem{buicua2004averaging}
A.~Buic{\u{a}} and J.~Llibre.
\newblock Averaging methods for finding periodic orbits via {B}rouwer degree.
\newblock {\em Bulletin des sciences mathematiques}, 128(1):7--22, 2004.

\bibitem{buica17}
A.~Buic\u{a}.
\newblock On the equivalence of the {M}elnikov functions method and the
  averaging method.
\newblock {\em Qual. Theory Dyn. Syst.}, 16(3):547--560, 2017.

\bibitem{CMZ21}
J.~A. Cid, J.~Mawhin, and M.~Zima.
\newblock An abstract averaging method with applications to differential
  equations.
\newblock {\em Journal of Differential Equations}, 274:231--250, 2021.

\bibitem{ellison}
J.~A. Ellison, A.~W. S\'{a}enz, and H.~S. Dumas.
\newblock Improved {$N$}th order averaging theory for periodic systems.
\newblock {\em J. Differential Equations}, 84(2):383--403, 1990.

\bibitem{Fa}
P.~Fatou.
\newblock Sur le mouvement d'un syst\`eme soumis \`a des forces \`a courte
  p\'{e}riode.
\newblock {\em Bull. Soc. Math. France}, 56:98--139, 1928.

\bibitem{hale61}
J.~K. Hale.
\newblock Integral manifolds of perturbed differential systems.
\newblock {\em Ann. of Math. (2)}, 73:496--531, 1961.

\bibitem{Hale}
J.~K. Hale.
\newblock {\em Ordinary differential equations}.
\newblock Robert E. Krieger Publishing Co., Inc., Huntington, N.Y., second
  edition, 1980.

\bibitem{Han2015}
M.~Han, V.~G. Romanovski, and X.~Zhang.
\newblock Equivalence of the {M}elnikov function method and the averaging
  method.
\newblock {\em Qualitative Theory of Dynamical Systems}, 15(2):471--479, Nov.
  2015.

\bibitem{BK}
N.~Krylov and N.~Bogolyubov.
\newblock {\em Prilozhenie metodov nelineinoi mekhaniki k teorii statsionarnykh
  kolebanii (The application of methods of nonlinear mechanics to the theory of
  stationary oscillations)}.
\newblock Akademiya Nauk Ukrainsko\u{\i} SSR, Kiev, 1934.

\bibitem{LliNovRod17}
J.~Llibre, D.~D. Novaes, and C.~A.~B. Rodrigues.
\newblock Averaging theory at any order for computing limit cycles of
  discontinuous piecewise differential systems with many zones.
\newblock {\em Phys. D}, 353/354:1--10, 2017.

\bibitem{LliNovTei2014}
J.~Llibre, D.~D. Novaes, and M.~A. Teixeira.
\newblock Higher order averaging theory for finding periodic solutions via
  {B}rouwer degree.
\newblock {\em Nonlinearity}, 27:563--583, 2014.

\bibitem{LlilNovTei15}
J.~Llibre, D.~D. Novaes, and M.~A. Teixeira.
\newblock On the birth of limit cycles for non-smooth dynamical systems.
\newblock {\em Bull. Sci. Math.}, 139(3):229--244, 2015.

\bibitem{MN}
Y.~A. Mitropolskii and N.~V. Dao.
\newblock {\em Applied asymptotic methods in nonlinear oscillations}, volume~55
  of {\em Solid Mechanics and its Applications}.
\newblock Kluwer Academic Publishers Group, Dordrecht, 1997.

\bibitem{N17}
D.~D. Novaes.
\newblock An equivalent formulation of the averaged functions via {B}ell
  polynomials.
\newblock In {\em Extended abstracts {S}pring 2016---nonsmooth dynamics},
  volume~8 of {\em Trends Math. Res. Perspect. CRM Barc.}, pages 141--145.
  Birkh\"{a}user/Springer, Cham, 2017.

\bibitem{Novaes2021}
D.~D. Novaes.
\newblock An averaging result for periodic solutions of {C}arath{\'{e}}odory
  differential equations.
\newblock {\em Proceedings of the American Mathematical Society}, 2021.
\newblock DOI:10.1090/proc/15810.

\bibitem{NovaesSilva}
D.~D. Novaes and F.~B. Silva.
\newblock Higher order analysis on the existence of periodic solutions in
  continuous differential equations via degree theory.
\newblock {\em SIAM Journal on Mathematical Analysis}, 53(2):2476--2490, 2021.

\bibitem{perko}
L.~M. Perko.
\newblock Higher order averaging and related methods for perturbed periodic and
  quasi-periodic systems.
\newblock {\em SIAM J. Appl. Math.}, 17:698--724, 1969.

\bibitem{SVM}
J.~A. Sanders, F.~Verhulst, and J.~A. Murdock.
\newblock {\em Averaging methods in nonlinear dynamical systems}, volume~59.
\newblock Springer, 2007.

\bibitem{Ver06}
F.~Verhulst.
\newblock {\em Nonlinear differential equations and dynamical systems}.
\newblock Universitext. Springer-Verlag, Berlin, second edition, 1996.
\newblock Translated from the 1985 Dutch original.

\end{thebibliography}

\end{document}